\date{\today}
\newtheorem{theorem}{Theorem}
\newtheorem*{theorem*}{Main Theorem}
\newtheorem{question}{Question}
\newtheorem{proposition}{Proposition}
\newtheorem{corollary}{Corollary}
\newtheorem*{question*}{Question}
\theoremstyle{definition}
\newtheorem{example}{Example}
\newtheorem{remark}{Remark}
\def\N{\mathbb N}
\def\op{\operatorname}
\begin{document}

\title[On universal objects in the class of graph inverse semigroups]{On universal objects in the class of graph inverse semigroups}

\author[S.~Bardyla]{Serhii~Bardyla}
\address{Faculty of Applied Mathematics and Informatics, National University of Lviv,
Universytetska 1, Lviv, 79000, Ukraine}
\email{sbardyla@yahoo.com}

\keywords{graph inverse semigroup, polycyclic monoid, topological inverse semigroup}

\subjclass[2010]{20M18, 22A15}

\begin{abstract}
In this paper we show that polycyclic monoids are universal objects in the class of graph inverse semigroups.
In particular, we prove that a graph inverse semigroup $G(E)$ over a directed graph $E$ embeds into the polycyclic monoid $\mathcal{P}_{\lambda}$ where $\lambda=|G(E)|$. We show that each graph inverse semigroup $G(E)$ admits the coarsest inverse semigroup topology $\tau$. Moreover, each injective homomorphism from $(G(E),\tau)$ to the $(\mathcal{P}_{|G(E)|},\tau)$ is a topological embedding.
\end{abstract}
\maketitle

We shall follow the terminology of~\cite{Clifford-Preston-1961-1967} and
\cite{Lawson-1998}. By $|A|$ we denote the cardinality of a set $A$ and by $\omega$ we denote the first infinite cardinal.
A~semigroup $S$ is called an \emph{inverse semigroup} if every $a$
in $S$ possesses a unique inverse, i.e., if there exists a unique
element $a^{-1}$ in $S$ such that
\begin{equation*}
    aa^{-1}a=a \qquad \mbox{and} \qquad a^{-1}aa^{-1}=a^{-1}.
\end{equation*}
The map $S\to S$, $x\mapsto x^{-1}$ assigning to each element of an inverse semigroup its inverse is called the \emph{inversion}.

A {\em directed graph} $E=(E^{0},E^{1},r,s)$ consists of sets $E^{0},E^{1}$ of {\em vertices} and {\em edges}, respectively, together with functions $s,r:E^{1}\rightarrow E^{0}$ which are called {\em source} and {\em range}, respectively. In this paper we refer to directed graphs simply as ``graphs".  A path $x=e_{1}\ldots e_{n}$ in graph $E$ is a finite sequence of edges $e_{1},\ldots,e_{n}$ such that $r(e_{i})=s(e_{i+1})$ for each positive integer $i<n$. We extend the source and range functions $s$ and $r$ on the set $\operatorname{Path}(E)$ of all pathes in graph $E$ as follows: for each $x=e_{1}\ldots e_{n}\in \operatorname{Path}(E)$ put $s(x)=s(e_{1})$ and $r(x)=r(e_{n})$. By $|x|$ we denote the length of a path $x$. We consider each vertex being a path of length zero. An edge $e$ is called a {\em loop} if $s(e)=r(e)$. A path $x$ is called a {\em cycle} if $s(x)=r(x)$ and $|x|>0$.


The bicyclic monoid ${\mathscr{C}}(p,q)$ is the semigroup with the identity $1$ generated by two elements $p$ and $q$ subject to the condition $pq=1$.


One of the generalizations of the bicyclic monoid is a polycyclic monoid.
For a non-zero cardinal $\lambda$, polycyclic monoid $\mathcal{P}_\lambda$ is the semigroup with identity and zero given by the presentation:
\begin{equation*}
    \mathcal{P}_\lambda=\left\langle \left\{p_i\right\}_{i\in\lambda}, \left\{p_i^{-1}\right\}_{i\in\lambda}\mid  p_i^{-1}p_i=1, p_j^{-1}p_i=0 \hbox{~for~} i\neq j\right\rangle.
\end{equation*}

Polycyclic monoid $\mathcal{P}_{k}$ over a finite non-zero cardinal $k$ was introduced in~\cite{Nivat-Perrot-1970}. Observe that the bicyclic semigroup with an adjoined zero is isomorphic to the polycyclic monoid $\mathcal{P}_{1}$. For a finite cardinal $k$ algebraic properties of a semigroup $\mathcal{P}_{k}$ were investigated in \cite{Lawson-2009} and \cite{Meakin-1993}.

For a given directed graph $E=(E^{0},E^{1},r,s)$ the graph inverse semigroup (or simply GIS) $G(E)$ over the graph $E$ is a semigroup with zero generated by the sets $E^{0}$, $E^{1}$ together with a set $E^{-1}=\{e^{-1}:e\in E^{1}\}$ satisfying the following relations for all $a,b\in E^{0}$ and $e,f\in E^{1}$:
\begin{itemize}
 \item [(i)]  $a\cdot b=a$ if $a=b$ and $a\cdot b=0$ if $a\neq b$;
 \item [(ii)] $s(e)\cdot e=e\cdot r(e)=e;$
 \item [(iii)] $e^{-1}\cdot s(e)=r(e)\cdot e^{-1}=e^{-1};$
 \item [(iv)] $e^{-1}\cdot f=r(e)$ if $e=f$ and $e^{-1}\cdot f=0$ if $e\neq f$.
\end{itemize}

Graph inverse semigroups are generalizations of the polycyclic monoids. In particular, for every non-zero cardinal $\lambda$ the polycyclic monoid $\mathcal{P}_{\lambda}$ is isomorphic to the graph inverse semigroup $G(E)$ over the graph $E$ which consists of one vertex and $\lambda$ distinct loops.

According to~\cite[Chapter~3.1]{Jones-2011}, each non-zero element of a graph inverse semigroup $G(E)$ is of the form  $ab^{-1}$ where $a,b\in \operatorname{Path}(E)\hbox{ and } r(a)=r(b)$. A semigroup operation in $G(E)$ is defined by the following formula:
\begin{equation*}
\begin{split}
  &  ab^{-1}\cdot cd^{-1}=
    \left\{
      \begin{array}{ccl}
        ac_{1}d^{-1}, & \hbox{if~~} c=bc_{1} & \hbox{for some~} c_1\in \operatorname{Path}(E);\\
        a(db_{1})^{-1},   & \hbox{if~~} b=cb_{1} & \hbox{for some~} b_1\in \operatorname{Path}(E);\\
        0,              & \hbox{otherwise},
      \end{array}
    \right.
    \end{split}
\end{equation*}
  and $ab^{-1}\cdot 0=0\cdot ab^{-1}=0\cdot 0=0.$

Simply verifications show that $G(E)$ is an inverse semigroup, moreover, $(uv^{-1})^{-1}=vu^{-1}$.

Graph inverse semigroups play an important role in the study of rings and $C^{*}$-algebras (see \cite{Abrams-2005,Ara-2007,Cuntz-1980,Kumjian-1998,Paterson-1999}).
Algebraic properties of graph inverse semigroups were studied in~\cite{Amal-2016, Bardyla-2018(1), Jones-2011, Jones-Lawson-2014, Lawson-2009, Mesyan-2016}.

In this paper we show that polycyclic monoids are universal objects in the class of graph inverse semigroups.
In particular, we prove that a graph inverse semigroup $G(E)$ over a directed graph $E$ embeds into the polycyclic monoid $\mathcal{P}_{\lambda}$ where $\lambda=|G(E)|$. We show that each graph inverse semigroup $G(E)$ admits the coarsest inverse semigroup topology $\tau$. Moreover, each injective homomorphism from $(G(E),\tau)$ to the $(\mathcal{P}_{|G(E)|},\tau)$ is a topological embedding.

\section{Main result}
For a graph $E$ put $\lambda_E=\max\{|E^0|,|E^1|,\omega\}$. A graph $E$ is called {\em countable} if $\lambda_E=\omega$.
\begin{theorem}\label{theorem1}
For an arbitrary graph $E$ the graph inverse semigroup $G(E)$ embeds into the polycyclic monoid $\mathcal{P}_{\lambda_E}$.
\end{theorem}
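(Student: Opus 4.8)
The plan is to construct an explicit injective homomorphism $\Phi\colon G(E)\to\mathcal P_{\lambda_E}$ by encoding the paths of $E$ as words over a suitable alphabet. Recall that $\mathcal P_\mu$ is the graph inverse semigroup of the one-vertex graph with $\mu$ loops, so its nonzero elements are exactly the expressions $uv^{-1}$ with $u,v$ words in the $\mu$ generators, and each such element has a unique normal form of this shape. I would introduce generators of $\mathcal P$ indexed by $E^0\sqcup E^1$: a letter $\alpha_v$ for each vertex $v$ and a letter $\beta_e$ for each edge $e$. To a path $x=e_1\cdots e_n$ I assign the word $\phi(x)=\alpha_{s(x)}\beta_{e_1}\cdots\beta_{e_n}$ (so a vertex $v$, viewed as a path of length $0$, is sent to the single letter $\alpha_v$), and I define $\Phi(ab^{-1})=\phi(a)\phi(b)^{-1}$ and $\Phi(0)=0$. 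Since each nonzero element of $G(E)$ has a unique representation $ab^{-1}$ with $r(a)=r(b)$, the map $\Phi$ is well defined.

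Next I would check that $\Phi$ is a homomorphism. The whole computation reduces to understanding the ``middle'' product $\phi(b)^{-1}\phi(c)$ in $\mathcal P$ for two paths $b,c$. Writing $\phi(b)=\alpha_{s(b)}\beta(b)$ and $\phi(c)=\alpha_{s(c)}\beta(c)$, where $\beta(\cdot)$ denotes the string of edge-letters, the key observation is that $\phi(b)$ is a prefix of $\phi(c)$ in the free monoid if and only if $s(b)=s(c)$ and $b$ is a prefix of $c$ as a path, i.e. $c=bc_1$; in that case $\phi(b)^{-1}\phi(c)=\beta(c_1)$. Thus the three clauses defining the product in $G(E)$ match exactly the prefix trichotomy governing $\phi(b)^{-1}\phi(c)$ in $\mathcal P$: if $c=bc_1$ then $\Phi(ab^{-1})\Phi(cd^{-1})=\phi(a)\beta(c_1)\phi(d)^{-1}=\phi(ac_1)\phi(d)^{-1}=\Phi(ac_1d^{-1})$, because concatenating $\phi(a)$ with the marker-free quotient $\beta(c_1)$ rebuilds precisely $\phi(ac_1)$; the symmetric clause $b=cb_1$ is handled the same way, giving $\phi(a)\beta(b_1)^{-1}\phi(d)^{-1}=\Phi(a(db_1)^{-1})$; and otherwise $\phi(b)^{-1}\phi(c)=0$, matching the zero product. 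Products involving $0$ are immediate.

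For injectivity I would use that $\phi$ is injective on $\operatorname{Path}(E)$ — the leading letter recovers $s(x)$ and the remaining letters recover the edge sequence — together with the uniqueness of normal forms in $\mathcal P$, which forces $\phi(a)\phi(b)^{-1}=\phi(a')\phi(b')^{-1}$ to imply $\phi(a)=\phi(a')$ and $\phi(b)=\phi(b')$, hence $a=a'$ and $b=b'$. Finally, the alphabet has cardinality $|E^0|+|E^1|\le\lambda_E$ since $\lambda_E\ge\omega$, and $\mathcal P_\mu$ sits inside $\mathcal P_{\lambda_E}$ as the sub-inverse-semigroup (containing $0$ and $1$) generated by any $\mu$ of the generators whenever $\mu\le\lambda_E$; composing yields the desired embedding into $\mathcal P_{\lambda_E}$.

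The step I expect to be the main obstacle is choosing the encoding so that vertex information is recorded without destroying the prefix structure. Placing a range-marker at the end of $\phi(x)$ would block the prefix relation, since an intervening $\alpha_{r(b)}$ would prevent $\phi(b)$ from being a prefix of $\phi(bc_1)$; hence the marker must sit at the source. The price is that the quotient $\phi(b)^{-1}\phi(c)=\beta(c_1)$ carries no marker of its own, and the crux of the verification is that this marker-free quotient nonetheless recombines with $\phi(a)$ to give exactly $\phi(ac_1)$. The degenerate cases in which $b$, $c$, or the quotient is a vertex (a path of length $0$) also need to be checked, to confirm both that $\phi(b)^{-1}\phi(b)=1$ reproduces the idempotent collapse and that the source-markers force the product to be $0$ precisely when $s(b)\neq s(c)$.
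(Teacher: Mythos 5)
Your proposal is correct and is essentially the paper's own construction: the paper fixes injections $g\colon E^0\to G^+$ and $h\colon E^1\to G^+$, defines $F$ on generators by $F(a)=g(a)g(a)^{-1}$ and $F(e)=g(s(e))h(e)g(r(e))^{-1}$, and shows the product telescopes to exactly your normal form $\phi(a)\phi(b)^{-1}$ with $\phi(x)=g(s(x))h(e_1)\cdots h(e_n)$, i.e.\ a source-marker letter followed by the edge letters. The only difference is organizational (the paper extends from generators and lets the telescoping do the work of your prefix-trichotomy argument), so the two proofs give the same embedding.
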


\begin{proof}
Fix an arbitrary graph $E$.
Let $G=G^+\cup G^-=\{p_{\alpha}\}_{\alpha\in\lambda_E}\cup\{p_{\alpha}^{-1}\}_{\alpha\in\lambda_E}$ be a set of generators of $\mathcal{P}_{\lambda_E}$. By "$\cdot$" we denote the semigroup operation in $\mathcal{P}_{\lambda_E}$.

Fix arbitrary injections $g:E^{0}\rightarrow G^+$ and $h:E^{1}\rightarrow G^+$.
For each vertex $a\in E^{0}$ put $$F(a)=g(a)g(a)^{-1}.$$
For each edge $e\in E^{1}$ put
 $$F(e)=g(s(e))h(e)g(r(e))^{-1}\hbox{ and } F(e^{-1})=F(e)^{-1}=g(r(e))h(e)^{-1}g(s(e))^{-1}.$$
We extend the map $F$ on the semigroup $G(E)$ by the following way: for each non-zero element $uv^{-1}=e_1e_2\ldots e_n(f_1f_2\ldots f_m)^{-1}\in G(E)$ put $$F(uv^{-1})=F(e_1)\cdot F(e_2)\cdots F(e_n)\cdot F(f_m^{-1})\cdot F(f_{m-1}^{-1}) \cdots F(f_1^{-1})\qquad \hbox{and} \qquad F(0)=0.$$

Firstly we show that the map $F:G(E)\rightarrow \mathcal{P}_{\lambda_E}$ is injective. Fix an arbitrary non-zero element $$uv^{-1}=e_1e_2\ldots e_n(f_1f_2\ldots f_m)^{-1}\in G(E).$$
Since $u,v\in\operatorname{Path}(E)$ and $r(u)=r(v)$ we obtain that $g(r(e_n))=g(r(f_m))$ and  $g(r(e_i))=g(s(e_{i+1}))$, $g(r(f_j))=g(s(f_{j+1}))$ for each positive integers $i<n$ and $j<m$.
Then
\begin{equation*}
\begin{split}
& F(e_1e_2\ldots e_n(f_1f_2\ldots f_m)^{-1})=F(e_1)\cdot F(e_2)\cdots F(e_n)\cdot F(f_m^{-1})\cdot F(f_{m-1}^{-1})\cdots F(f_1^{-1})=\\
& =g(s(e_1))h(e_1)\big[g(r(e_1))^{-1}\cdot g(s(e_2))\big]h(e_2)\big[g(r(e_2))^{-1}\cdot g(s(e_3))\big]h(e_3)g(r(e_3))^{-1}\cdots \\
& \cdots g(s(e_n))h(e_n)\big[g(r(e_n))^{-1}\cdot g(r(f_m))\big]h(f_m)^{-1}\big[g(s(f_m))^{-1}\cdot g(r(f_{m-1}))\big]h(f_{m-1})^{-1}g(s(f_{m-1}))^{-1}\cdots\\
& \cdots g(r(f_2))h(f_2)^{-1}\big[g(s(f_2))^{-1}\cdot g(r(f_1))\big]h(f_1)^{-1}g(s(f_1))^{-1}=g(s(e_1))\cdot h(e_1)\cdot 1\cdot h(e_2) \cdot 1\cdots\\
&\cdots h(e_n)\cdot 1 \cdot h(f_m)^{-1}\cdot 1 \cdot h(f_{m-1})^{-1}\cdot 1\cdots h(f_2)^{-1}\cdot 1 \cdot h(f_1)^{-1}\cdot g(s(f_1))^{-1}=\\
& = g(s(e_1))\cdot h(e_1)\cdot h(e_2)\cdots h(e_n)\cdot h(f_m)^{-1} \cdot h(f_{m-1})^{-1}\cdots h(f_1)^{-1}\cdot g(s(f_1))^{-1}.\\
\end{split}
\end{equation*}
Since $h$ and $g$ are injections we see that $F(u_1v_1^{-1})=F(u_2v_2^{-1})$ if and only if $u_1=u_2$ and $v_1=v_2$, for each non-zero elements $u_1v_1^{-1}$ and $u_2v_2^{-1}$ of $G(E)$. Also, $F(uv^{-1})\neq 0$ for each non-zero element $uv^{-1}\in G(E)$. Hence the map $F$ is an injection.

Now we are going to show that the map $F$ is a homomorphism. Observe that for each element $uv^{-1}\in G(E)$, $F(uv^{-1}\cdot 0)=F(uv^{-1})\cdot F(0)=0$ and $F(0 \cdot uv^{-1})=F(0)\cdot F(uv^{-1})=0$. Fix two non-zero elements $ab^{-1}=a_1\ldots a_n(b_1\ldots b_m)^{-1}$ and $cd^{-1}=c_1\ldots c_k(d_1\ldots d_t)^{-1}$ of the semigroup $G(E)$. There are four cases to consider:
\begin{itemize}
\item[$(1)$] $ab^{-1}\cdot cd^{-1}=aud^{-1}$, i.e., $c=bu$ for some path $u$ such that $|u|>0$;
\item[$(2)$] $ab^{-1}\cdot cd^{-1}=a(dv)^{-1}$, i.e., $b=cv$ for some path $v$ such that $|v|>0$;
\item[$(3)$] $ab^{-1}\cdot cd^{-1}=ad^{-1}$, i.e., $b=c$;
\item[$(4)$] $ab^{-1}\cdot cd^{-1}=0$.
\end{itemize}

Consider case $(1)$. Assume that $c=bu=b_1\ldots b_mc_{m+1}\ldots c_k$. Observe that $g(s(b_{i+1}))=g(r(b_i))$ for each $i<m$.
Then
\begin{equation*}
\begin{split}
& F(ab^{-1})\cdot F(cd^{-1})=F(a)\cdot F(b_m)^{-1}\cdots F(b_1)^{-1}\cdot F(b_1)\cdots F(b_m)\cdot F(c_{m+1})\cdots F(c_{k})\cdot F(d^{-1})= \\
&= F(a)\cdot g(r(b_m))h(b_m)^{-1}g(s(b_m))^{-1}\cdots g(r(b_2))h(b_2)^{-1}g(s(b_2))^{-1}\cdot  g(r(b_1))\big[h(b_1)^{-1}g(s(b_1))^{-1}\cdot\\
&\cdot g(s(b_1))h(b_1)\big]g(r(b_1))^{-1}\cdot g(s(b_2))h(b_2)g(r(b_2))^{-1} \cdots g(s(b_m))h(b_m)g(r(b_m))^{-1}\cdot F(c_{m+1})\cdots \\
&\cdots F(c_{k})\cdot F(d^{-1})= F(a)\cdot g(r(b_m))h(b_m)^{-1}g(s(b_m))^{-1}\cdots g(r(b_2))h(b_2)^{-1}g(s(b_2))^{-1}\cdot\big[g(r(b_1))\cdot \\
&\cdot g(r(b_1))^{-1}\cdot g(s(b_2))\big]h(b_2)g(r(b_2))^{-1} \cdots g(s(b_m))h(b_m)g(r(b_m))^{-1}\cdot F(c_{m+1})\cdots F(c_{k})\cdot F(d^{-1})=\\
&= F(a)\cdot g(r(b_m))h(b_m)^{-1}g(s(b_m))^{-1}\cdots g(r(b_2))\big[h(b_2)^{-1}g(s(b_2))^{-1}\cdot g(s(b_2))h(b_2)\big]g(r(b_2))^{-1} \cdots\\
& \cdots g(s(b_m))h(b_m)g(r(b_m))^{-1}\cdot F(c_{m+1})\cdots F(c_{k})\cdot F(d^{-1})=F(a)\cdot g(r(b_m))h(b_m)^{-1}g(s(b_m))^{-1}\cdots\\
&\big[ g(r(b_2))\cdot g(r(b_2))^{-1}\cdot g(s(b_3))\big]\cdots g(s(b_m))h(b_m)g(r(b_m))^{-1}\cdot F(c_{m+1})\cdots F(c_{k})\cdot F(d^{-1})=\ldots\\
& =F(a)\cdot F(c_{m+1})\cdots F(c_{k})\cdot F(d^{-1})=F(aud^{-1})=F(ab^{-1}\cdot cd^{-1}).\\
\end{split}
\end{equation*}
Cases $(2)$ and $(3)$ are similar to case $(1)$.
Consider case $(4)$. Since $ab^{-1}\cdot cd^{-1}=0$ there exists a positive integer $n_{0}\leq\min\{m,k\}$ such that $b_{i}=c_i$ for each positive integer $i<n_0$ and $b_{n_{0}}\neq c_{n_0}$.
Hence
\begin{equation*}
\begin{split}
& F(ab^{-1})\cdot F(cd^{-1})=F(a)\cdot F(b_m)^{-1}\cdots F(b_{n_{0}})^{-1}\cdot \big[F(b_{n_{0}-1})^{-1}\cdots F(b_1)^{-1}\cdot F(b_1)\cdots F(b_{n_{0}-1})\big]\cdot\\
& \cdot F(c_{n_0})\cdots F(c_{k})\cdot F(d)^{-1}= F(a)\cdot F(b_m)^{-1}\cdots F(b_{n_{0}})^{-1}\cdot F(c_{n_{0}})\cdots F(c_k)\cdot F(d)^{-1}=\\
& = F(a)\cdot F(b_m)^{-1}\cdots F(b_{n_{0}+1})^{-1}\cdot g(r(b_{n_{0}}))\big[h(b_{n_0})^{-1}g(s(b_{n_{0}}))^{-1} \cdot g(s(c_{n_{0}}))h(c_{n_0})\big]g(r(c_{n_{0}}))^{-1}\cdot \\
&\cdot F(c_{n_{0}+1})\cdots F(c_k)\cdot F(d)^{-1}=0=F(0),\\
\end{split}
\end{equation*}
because $b_{n_0}\neq c_{n_0}$ and hence even if $s(b_{n_0})=s(c_{n_0})$ (which implies that $g(s(b_{n_{0}}))^{-1} \cdot g(s(c_{n_{0}}))=1$) we have that $h(b_{n_{0}})\neq h(c_{n_{0}})$ which yields that $h(b_{n_{0}})^{-1}\cdot h(c_{n_{0}})=0$.
Hence the map $F$ is an embedding of the semigroup $G(E)$ into the polycyclic monoid $\mathcal{P}_{\lambda_{E}}$.
\end{proof}

Since the polycyclic monoid $\mathcal{P}_{\omega}$ embeds into the polycyclic monoid $\mathcal{P}_2$ (see \cite[Chapter 9.3, Proposition 6]{Lawson-1998}) Theorem~\ref{theorem1} implies the following:

\begin{theorem}\label{th2}
Each graph inverse semigroup $G(E)$ over a countable graph $E$ embeds into the polycyclic monoid $\mathcal{P}_2$.
\end{theorem}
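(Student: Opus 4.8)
The plan is to obtain this statement as an immediate consequence of Theorem~\ref{theorem1} together with the cited embedding $\mathcal{P}_{\omega}\hookrightarrow\mathcal{P}_2$, so essentially no new computation is required. First I would unwind the relevant definition: a graph $E$ is \emph{countable} precisely when $\lambda_E=\omega$, which means that both $|E^0|$ and $|E^1|$ are at most $\omega$. Thus for such an $E$ the cardinal $\lambda_E$ appearing in the conclusion of Theorem~\ref{theorem1} is exactly $\omega$.

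Second, I would apply Theorem~\ref{theorem1} directly to produce an embedding (that is, an injective semigroup homomorphism) $F\colon G(E)\to\mathcal{P}_{\lambda_E}=\mathcal{P}_{\omega}$. I would then invoke the result of~\cite[Chapter~9.3, Proposition~6]{Lawson-1998}, which supplies an embedding $\Phi\colon\mathcal{P}_{\omega}\to\mathcal{P}_2$; conceptually this works by encoding the countably many pairwise orthogonal generators of $\mathcal{P}_{\omega}$ by a prefix code in the two generators of $\mathcal{P}_2$, so that the relations $p_i^{-1}p_i=1$ and $p_j^{-1}p_i=0$ for $i\neq j$ are faithfully reproduced.

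The final step is to form the composite $\Phi\circ F\colon G(E)\to\mathcal{P}_2$. Being a composition of semigroup homomorphisms it is a homomorphism, and being a composition of injective maps it is injective; hence it is an embedding, which is exactly the assertion. The only thing that needs even a moment's attention is the routine observation that a composite of embeddings is again an embedding, so there is no genuine obstacle in this argument: the whole weight of the theorem rests on Theorem~\ref{theorem1} and on the already established fact that $\mathcal{P}_{\omega}$ sits inside $\mathcal{P}_2$.
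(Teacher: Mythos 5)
Your proposal is correct and is exactly the paper's argument: the paper derives Theorem~\ref{th2} by noting that a countable graph has $\lambda_E=\omega$, applying Theorem~\ref{theorem1} to embed $G(E)$ into $\mathcal{P}_{\omega}$, and composing with the embedding $\mathcal{P}_{\omega}\hookrightarrow\mathcal{P}_2$ from \cite[Chapter 9.3, Proposition 6]{Lawson-1998}. No discrepancies to report.
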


Since  for each inverse semigroup $G(E)$, $\lambda_E=\omega$ if $|G(E)|\leq\omega$ and $|G(E)|=\lambda_E$ if $|G(E)|>\omega$
Theorems~\ref{theorem1} and~\ref{th2} imply the following:

\begin{corollary}
Let $G(E)$ be the graph inverse semigroup over an arbitrary graph $E$. Then the following conditions hold:
\begin{itemize}
\item if $|G(E)|\leq\omega$ then $G(E)$ embeds into the polycyclic monoid $\mathcal{P}_2$;
\item if $|G(E)|=\lambda>\omega$ then $G(E)$ embeds into the polycyclic monoid $\mathcal{P}_{\lambda}$.
\end{itemize}
\end{corollary}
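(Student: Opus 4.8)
The plan is to reduce the corollary to Theorems~\ref{theorem1} and~\ref{th2} by pinning down the exact relationship between the cardinal $\lambda_E$ and the cardinality $|G(E)|$. The only genuine content is the cardinal arithmetic identifying these quantities in the relevant ranges; once that dictionary is in place, the two bullet points become immediate applications of the two theorems.

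First I would compute $|G(E)|$ in terms of $\operatorname{Path}(E)$. Since every non-zero element of $G(E)$ has the canonical form $ab^{-1}$ with $a,b\in\operatorname{Path}(E)$ and $r(a)=r(b)$, the non-zero elements inject into $\operatorname{Path}(E)\times\operatorname{Path}(E)$, so $|G(E)|\le|\operatorname{Path}(E)|^2+1$. Conversely, the injectivity of the canonical form (established while proving that $F$ is injective in Theorem~\ref{theorem1}) shows that $a\mapsto aa^{-1}$ embeds $\operatorname{Path}(E)$ into $G(E)$, whence $|\operatorname{Path}(E)|\le|G(E)|$. Consequently, when $\operatorname{Path}(E)$ is infinite we get $|G(E)|=|\operatorname{Path}(E)|$.

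Next I would bound $|\operatorname{Path}(E)|$ against $\lambda_E$. Each path is either a vertex or a finite nonempty sequence of edges, so $|\operatorname{Path}(E)|\le|E^0|+\sum_{n\ge1}|E^1|^n$. If $|E^1|$ is infinite, every term $|E^1|^n$ equals $|E^1|$ and the countable sum is again $|E^1|$, giving $|\operatorname{Path}(E)|\le\max\{|E^0|,|E^1|\}=\lambda_E$; if $|E^1|$ is finite, the sum is at most $\omega$, giving $|\operatorname{Path}(E)|\le\max\{|E^0|,\omega\}\le\lambda_E$. In the other direction, $E^0$ and $E^1$ sit inside $\operatorname{Path}(E)$ as the paths of length $0$ and $1$, so $\max\{|E^0|,|E^1|\}\le|\operatorname{Path}(E)|$. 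Combining these: whenever $\lambda_E>\omega$, equivalently $\max\{|E^0|,|E^1|\}>\omega$, we obtain $|\operatorname{Path}(E)|=\lambda_E$ and hence $|G(E)|=\lambda_E$; and $\lambda_E=\omega$ forces $|\operatorname{Path}(E)|\le\omega$, hence $|G(E)|\le\omega$. Thus $|G(E)|\le\omega$ and $\lambda_E=\omega$ are equivalent, while $|G(E)|=\lambda_E$ when $\lambda_E>\omega$.

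With this dictionary the corollary splits into two cases. If $|G(E)|\le\omega$ then $\lambda_E=\omega$, i.e.\ $E$ is countable, so Theorem~\ref{th2} embeds $G(E)$ into $\mathcal{P}_2$. If $|G(E)|=\lambda>\omega$ then $\lambda_E=\lambda$, so Theorem~\ref{theorem1} embeds $G(E)$ into $\mathcal{P}_{\lambda_E}=\mathcal{P}_\lambda$. I expect the main (indeed only) obstacle to be the cardinal-arithmetic step, namely verifying $\sum_{n\ge1}|E^1|^n=|E^1|$ for infinite $|E^1|$ and checking that a finite edge set cannot push $|\operatorname{Path}(E)|$ past $\omega$; the passage between $G(E)$ and $\operatorname{Path}(E)$ and the final invocations of the two theorems are routine.
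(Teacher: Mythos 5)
Your proposal is correct and follows the paper's route exactly: the paper derives the corollary from Theorems~\ref{theorem1} and~\ref{th2} via the same observation that $\lambda_E=\omega$ when $|G(E)|\leq\omega$ and $|G(E)|=\lambda_E$ when $|G(E)|>\omega$, which it states without proof. You simply supply the routine cardinal arithmetic (via $|G(E)|=|\operatorname{Path}(E)|$ in the infinite case) that the paper leaves implicit.
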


\begin{remark}
However, there exists an inverse subsemigroup of the polycyclic monoid $\mathcal{P}_2$ which is not isomorphic to any graph inverse semigroup. An example of such subsemigroup is a Gauge monoid (see \cite[Chapter 2.3]{Jones-2011}). Moreover, an inverse monoid $S$ is isomorphic to some graph inverse semigroup $G(E)$ if and only if $S$ is isomorphic to the polycyclic monoid $\mathcal{P}_{\lambda}$ for some cardinal $\lambda$. Indeed, if a graph $E$ contains at least two vertices then semigroup $G(E)$ does not contain a unit and if graph $E$ contains only one vertex then $G(E)$ is isomorphic to the monoid $\mathcal{P}_{\lambda}$, where $\lambda=|E^1|$ (here we agree that monoid $\mathcal{P}_{0}$ is isomorphic to the semilattice $(\{0,1\},\min)$).
\end{remark}

\section{Topological versions of Theorem~\ref{theorem1}}

In this section we investigate embeddings of graph inverse semigroups into topological inverse polycyclic monoids. After some preparatory results concerning topologizing of graph inverse semigroups, we construct (in canonical way) the coarsest inverse semigroup topology $\tau$ on each graph inverse semigroup $G(E)$. Moreover, each injective homomorphism $f:(G(E),\tau)\rightarrow (\mathcal{P}_{\lambda},\tau)$ is a topological embedding.

A {\em topological (inverse) semigroup} is a Hausdorff topological space together with a continuous semigroup operation (and an~inversion, respectively).  If $S$ is a semigroup (an inverse semigroup) and $\tau$ is a topology on $S$ such that $(S,\tau)$ is a topological (inverse) semigroup, then we
shall call $\tau$ a ({\em inverse}) {\em semigroup topology} on $S$.
A {\em semitopological semigroup} is a Hausdorff topological space together with a separately continuous semigroup operation.

Topological and semitopological graph inverse semigroups were investigated in~\cite{Bardyla-2018,Bardyla-2017,Bardyla-2016(1),BardGut-2016(1),BardGut-2016(2),Gutik-2015,Mesyan-Mitchell-Morayne-Peresse-2013}.

Let $E$ be an arbitrary graph. A path $a\in\operatorname{Path}(E)$ is called a {\em prefix} of a path $b\in\operatorname{Path}(E)$ if $b=ac$ for some path $c\in\operatorname{Path}(E)$. By $\leq$ we denote a partial order on $\operatorname{Path}(E)$ which is defined as follows: for each elements $a,b\in \op{Path}(E)$, $a\leq b$ iff $b$ is a prefix of $a$. A set which is endowed with a partial order is called {\em poset}. For each element $x$ of a poset $X$ denote ${\downarrow}x=\{y\in X\mid x\leq y\}$. For a subset $A\in X$ put ${\downarrow}A=\cup_{x\in A}{\downarrow}x$.  A subset $A$ of a poset $(X,\leq)$ is called an {\em ideal} if ${\downarrow}A=A$.

A family $\mathcal{F}$ of subsets of a set $X$ is called a {\em filter} if it satisfies the following conditions:
\begin{itemize}
\item[$(1)$] $\emptyset\notin \mathcal{F}$;
\item[$(2)$] If $A\in \mathcal{F}$ and $A\subset B$ then $B\in \mathcal{F}$;
\item[$(3)$] If $A,B\in \mathcal{F}$ then $A\cap B\in\mathcal{F}$.
\end{itemize}
A family $\mathcal{B}$ is called a {\em base} of a filter $\mathcal{F}$ if for each element $A\in\mathcal{F}$ there exists element $B\in\mathcal{B}$ such that $B\subset A$. A filter $\mathcal{F}$ is called {\em free} if $\cap_{F\in\mathcal{F}}=\emptyset$.

%

Let $E$ be an arbitrary graph and $\mathcal{F}$ be a free filter on the set $\operatorname{Path}(E)$.
The filter $\mathcal{F}$ generates the topology $\tau_{\mathcal{F}}$ on GIS $G(E)$ which is defined as follows: each non-zero element of $G(E)$ is  isolated (by~\cite[Theorem~4]{Bardyla-2018}, this condition is necessary if we want $(G(E),\tau_{\mathcal{F}})$ to be a semitopological semigroup) and $\mathcal{B}_{\mathcal{F}}(0)=\{U_{F}(0): F\in\mathcal{F}\}$ is a base of the topology $\tau_{\mathcal{F}}$ at zero $0\in G(E)$ where $U_{F}(0)=\{ab^{-1}\in G(E): a,b\in F\}\cup\{0\}.$

By $\mathcal{F}_{cf}$ we denote a filter of cofinite subsets of $\op{Path}(E)$.
A filter $\mathcal{F}$ on the set $\op{Path}(E)$ is called {\em topological} if $\mathcal{F}$ satisfies the following conditions:
\begin{itemize}
\item[$(i)$] Let $F\in \mathcal{F}$ and $a,b\in\op{Path}(E)$ such that $r(a)=r(b)$. Then the set $F_1=F\setminus\{bk\mid k\in\op{Path}(E)\hbox{ and }ak\notin F\}$ belongs to $\mathcal{F}$.
\item[$(ii)$] $\mathcal{F}_{cf}\subset \mathcal{F}$;
\item[$(iii)$] $\mathcal{F}$ has a base which consists of ideals of $(\op{Path}(E),\leq)$.
\end{itemize}


\begin{proposition}\label{prop0}
Let $E$ be an arbitrary graph and $\mathcal{F}$ be a topological filter on the set $\operatorname{Path}(E)$.
Then $(G(E),\tau_{\mathcal{F}})$ is a topological inverse semigroup.
\end{proposition}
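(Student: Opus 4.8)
The plan is to verify the three defining properties of a topological inverse semigroup for $(G(E),\tau_{\mathcal{F}})$ separately: that $\tau_{\mathcal{F}}$ is Hausdorff, that the inversion is continuous, and that the multiplication is jointly continuous. The first two are quick. For Hausdorffness, two distinct non-zero elements are separated by their open singletons, while a non-zero element $ab^{-1}$ is separated from $0$ as follows: since $\mathcal{F}$ is free there is $F\in\mathcal{F}$ with $a\notin F$, and then $\{ab^{-1}\}$ and $U_F(0)$ are disjoint open sets. For continuity of the inversion, note that it fixes $0$ and permutes the isolated points, so it suffices to check continuity at $0$; but the defining condition ``$a,b\in F$'' of $U_F(0)$ is symmetric in $a$ and $b$, whence $U_F(0)^{-1}=U_F(0)$, and the inversion is in fact a homeomorphism.

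The substance of the proof is joint continuity of the multiplication $m$, which I would establish by showing that $m^{-1}(W)$ is open for every basic open $W$. If $W=\{pq^{-1}\}$ is an isolated non-zero point, then every pair $(x,y)$ with $xy=pq^{-1}$ must consist of two non-zero, hence isolated, elements (a product with a zero factor is $0$), so all such pairs are interior points and this preimage is open with no further work. Thus the whole problem reduces to showing that $m^{-1}(U_F(0))$ is open for each $F\in\mathcal{F}$, equivalently to establishing joint continuity at the pairs $(x,y)$ with $xy=0$. Here I would split into cases according to how many of $x,y$ equal $0$. When both are non-zero they are isolated, the product is the single point $0$, and $\{x\}\times\{y\}$ already maps into $U_F(0)$; so only the mixed pairs $(0,cd^{-1})$, $(cd^{-1},0)$ and the pair $(0,0)$ require an argument.

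For a mixed pair, say $(0,cd^{-1})$, the second coordinate is \emph{fixed}, and I would seek a neighbourhood $U_{F'}(0)$ with $U_{F'}(0)\cdot cd^{-1}\subseteq U_F(0)$, choosing $F'\subseteq F$, $F'\in\mathcal{F}$, so that every product $ab^{-1}\cdot cd^{-1}$ with $a,b\in F'$ is either $0$ or has both path components in $F$. Removing from $F$ the finitely many prefixes of $c$ (legitimate since $\mathcal{F}_{cf}\subseteq\mathcal{F}$ by condition $(ii)$) kills the branch of the product formula in which $b$ is a prefix of $c$, so the only surviving non-zero products come from $c$ being a proper prefix of $b$ and equal $a(db_1)^{-1}$ with $b=cb_1$. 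Applying condition $(i)$ to $F$ with the fixed pair $(d,c)$ (note $r(c)=r(d)$) and intersecting $F'$ into the resulting filter set yields exactly the implication ``$cb_1\in F'\Rightarrow db_1\in F$'', which together with $a\in F'\subseteq F$ places the product in $U_F(0)$. The pair $(cd^{-1},0)$ is handled symmetrically, or deduced from the previous one via the already-established continuity of the inversion.

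The main obstacle is the pair $(0,0)$, because now \emph{nothing} is fixed: a single use of condition $(i)$ controls only one pair of paths, whereas here the paths occurring in the two factors range over all of $F'$, and a filter cannot absorb infinitely many such constraints at once. This is precisely where condition $(iii)$ enters. I would take $F'\in\mathcal{F}$ to be an ideal with $F'\subseteq F$ (possible since $\mathcal{F}$ has a base of ideals) and set the two source neighbourhoods equal to $U_{F'}(0)$. The role of the ideal condition is that whenever a product $ab^{-1}\cdot cd^{-1}$ with $a,b,c,d\in F'$ is non-zero, so that it equals $ac_1d^{-1}$ with $c=bc_1$ or $a(db_1)^{-1}$ with $b=cb_1$, the ideal property keeps the newly formed path component (namely $ac_1$, respectively $db_1$) inside $F'\subseteq F$, while the untouched component already lies in $F'\subseteq F$; hence the product lands in $U_F(0)$. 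Checking this closure behaviour carefully, and confirming that a single $F'$ works simultaneously for all branches of the multiplication table (the inversion symmetry again halving the bookkeeping), is the technical heart of the argument. Once every pair is covered, $m^{-1}(U_F(0))$ is open, $m$ is continuous, and $(G(E),\tau_{\mathcal{F}})$ is a topological inverse semigroup.
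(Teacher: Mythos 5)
Your proposal is correct and takes essentially the same route as the paper's proof: Hausdorffness and the symmetry $U_F(0)^{-1}=U_F(0)$ dispose of the first two properties, and continuity of multiplication is reduced to the three types of pairs with product $0$, handled exactly as in the paper via conditions $(i)$--$(ii)$ for the mixed pairs and the ideal base from condition $(iii)$ for the pair $(0,0)$.
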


\begin{proof}
Fix an arbitrary non-zero element $uv^{-1}\in G(E)$. By condition $(ii)$, the set $F=\op{Path}(E)\setminus \{u,v\}$ belongs to $\mathcal{F}$. Then $uv^{-1}\notin U_{F}(0)$. Hence a topological space $(G(E),\tau_{\mathcal{F}})$ is Hausdorff.

Observe that $\left(U_{F}(0)\right)^{-1}=U_{F}(0)$ for any $F\in\mathcal{F}$. Hence the inversion is continuous in $(G(E),\tau_{\mathcal{F}})$.

To prove the continuity of the semigroup operation in $(G(E),\tau_{\mathcal{F}})$ we need to consider the following three cases:
\begin{itemize}
\item[$(1)$] $ab^{-1}\cdot 0=0$;
\item[$(2)$] $0\cdot ab^{-1}=0$;
\item[$(3)$] $0\cdot 0=0$.
\end{itemize}

Consider case $(1)$.
Fix an arbitrary element $ab^{-1}\in G(E)$ and any basic open neighbourhood $U_{F}(0)$ of 0. Let $P_b$ be the set of all prefixes of the path $b$. Put $H=F\setminus (P_b\cup\{bk\in \operatorname{Path}(E): ak\notin F\})$. Since the filter $\mathcal{F}$ satisfies condition $(i)$, the set $F\setminus\{bk\in \operatorname{Path}(E): ak\notin F\}\in\mathcal{F}$.  Obviously, the set $P_{b}$ is finite. By condition $(ii)$, the set $F\setminus P_b$ belongs to $\mathcal{F}$. Hence $H\in\mathcal{F}$.

We claim that $ab^{-1}\cdot U_{H}(0)\subseteq U_{F}(0)$.
Indeed, fix an arbitrary element $cd^{-1}\in U_{H}(0)$. If $ab^{-1}\cdot cd^{-1}=0$ then there is nothing to prove. Suppose that $ab^{-1}\cdot cd^{-1}\neq 0$. The choice of the neighbourhood $U_{H}(0)$ implies that $ab^{-1}\cdot cd^{-1}=ac_1d^{-1}$, i.e., $c=bc_1$ for some path $c_1\in\operatorname{Path}(E)$. Observe that $d\in F$ and the definition of the set $H$ implies that $ac_1\in F$. Hence $ac_1d^{-1}\in U_{F}(0)$.

Consider case $(2)$.
Put $G=F\setminus (P_a\cup\{ak\in \operatorname{Path}(E)\colon bk\notin F\})$. Similar arguments imply that $G\in\mathcal{F}$ and $U_{G}(0)\cdot ab^{-1}\subseteq U_{F}(0)$.

Consider case $(3)$. Fix an arbitrary open neighbourhood $U_F(0)$ of $0$.
Let $T$ be any element of $\mathcal{F}$ such that $T\subset F$ and $T$ is an ideal of $(\operatorname{Path}(E),\leq)$.
We claim that $U_{T}(0)\cdot U_{T}(0)\subseteq U_{F}(0)$. Indeed, fix an arbitrary elements $ab^{-1}\in U_{T}(0)$ and $cd^{-1}\in U_{T}(0)$. If $ab^{-1}\cdot cd^{-1}=0$ then there is nothing to prove. Suppose that $ab^{-1}\cdot cd^{-1}\neq 0$. Then either
$$ab^{-1}\cdot cd^{-1}=ac_1d^{-1}\qquad \hbox{or}\qquad ab^{-1}\cdot cd^{-1}=a(db_1)^{-1}.$$
Since $T$ is an ideal of $(\operatorname{Path}(E),\leq)$ we obtain that $ac_1\in T$ and $db_1\in T$ which implies that $U_{T}(0)\cdot U_{T}(0)\subseteq U_{T}(0)\subset U_F(0)$.

Hence $(G(E),\tau_{\mathcal{F}})$ is a topological inverse semigroup.
\end{proof}



The following example shows that there exists a graph inverse semigroup $G(E)$ which admits inverse semigroup topology generated by a filter on the set $\op{Path}(E)$ which does not have a base consisting of ideals of $(\op{Path}(E),\leq)$.

\begin{example}
Let $E$ be a graph which is depicted below.

\centerline{
\xymatrix{
    \bullet_{1} \ar[r]^{(1,2)}&\bullet_{2} &\bullet_{3} \ar[r]^{(3,4)}&\bullet_{4} \cdots &\bullet_{2n-1} \ar[r]^{(2n-1,2n)}&\bullet_{2n} \cdots
    }
}

We enumerate vertices of graph $E$ with positive integers and identify each edge $x$ with a pair of positive integers $(2n$$-1,2n)$ where $s(x)=2n$$-1$ and $r(x)=2n$.
Clearly that $\op{Path}(E)=E^0\cup E^1$ where $E^0$ is the set of all vertices of graph $E$ and $E^1$ is the set of all edges of graph $E$. Let $\mathcal{F}$ be the filter on $\op{Path}(E)$ which base consists of cofinite subsets of $E_0$. Since the filter $\mathcal{F}$ contains the filter $\mathcal{F}_{cf}$ the space $(G(E),\tau_{\mathcal{F}})$ is Hausdorff. The continuity of the semigroup operation in $(G(E),\tau_{\mathcal{F}})$ follows from the following equations:
\begin{itemize}
\item[] $ab^{-1}\cdot U_F(0)\subseteq\{0\}$ where $F=E^0\setminus s(b)$;
\item[] $U_F(0)\cdot ab^{-1}\subseteq\{0\}$ where $F=E^0\setminus s(a)$;
\item[] $U_F\cdot U_F=U_F$ for an arbitrary $F\subseteq E^0$.
\end{itemize}
Hence $(G(E),\tau_{\mathcal{F}})$ is a topological inverse semigroup.

However, the filter $\mathcal{F}$ does not admit a base which consists of ideals of $(\op{Path}(E),\leq)$. Indeed, fix an arbitrary cofinite subset $F$ of $E^0$. The largest ideal which is contained in $F$ is the set $F\setminus \{2n-1\mid n\in \N\}$ which does not belong to the filter $\mathcal{F}$.
\end{example}

Now we are going to construct two canonical examples of inverse semigroup topologies on graph inverse semigroups which are generated by topological filters.

\begin{example}\label{ex1}
For each positive integer $n$ put $U_n=\{u\in\operatorname{Path}(E):|u|>n\}$.
Let $\mathcal{F}_{\omega}$ be the filter on the set $\operatorname{Path}(E)$ generated by the base consisting of the sets $U_n$, $n\in\N$.
Simple verifications show that the filter $\mathcal{F}_{\omega}$ is topological. Hence, by Proposition~\ref{prop0}, $(G(E),\tau_{\mathcal{F}_{\omega}})$ is a topological inverse semigroup.
\end{example}

\begin{example}\label{ex2}
Let $\mathcal{F}_{cf}$ be the filter consisting of cofinite subsets of $\operatorname{Path}(E)$.
Simple verifications show that the filter $\mathcal{F}_{cf}$ is topological. Hence, by Proposition~\ref{prop0}, $(G(E),\tau_{\mathcal{F}_{cf}})$ is a topological inverse semigroup.
\end{example}

\begin{remark}
The following arguments imply that in the general case the topological spaces $(G(E),\tau_{\mathcal{F}_{\omega}})$ and $(G(E),\tau_{\mathcal{F}_{cf}})$ are not homeomorphic.
A topological space $(G(E),\tau_{\mathcal{F}_{\omega}})$ is discrete if and only if there exists a positive integer $n$ such that $|u|<n$ for each path $u\in\operatorname{Path}(E)$. However, a topological space $(G(E),\tau_{\mathcal{F}_{cf}})$ is discrete if and only if $G(E)$ is finite. Since the filter $\mathcal{F}_{\omega}$ has a countable base a topological space $(G(E),\tau_{\mathcal{F}_{\omega}})$ is always metrizable. However, the space $(G(E),\tau_{\mathcal{F}_{cf}})$ is metrizable iff the set $\op{Path}(E)$ is countable. On the other hand, topological spaces $(\mathcal{P}_2,\tau_{\mathcal{F}_{\omega}})$ and $(\mathcal{P}_2,\tau_{\mathcal{F}_{cf}})$ are homeomorphic.
\end{remark}



\begin{theorem}\label{min} For each graph $E$, $\tau_{\mathcal{F}_{cf}}$ is the coarsest inverse semigroup
topology on $G(E)$.
\end{theorem}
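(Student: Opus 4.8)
The plan is to prove that $\tau_{\mathcal{F}_{cf}}\subseteq\tau$ for an \emph{arbitrary} inverse semigroup topology $\tau$ on $G(E)$. Since $\mathcal{F}_{cf}$ is topological, Example~\ref{ex2} (via Proposition~\ref{prop0}) already gives that $\tau_{\mathcal{F}_{cf}}$ is itself an inverse semigroup topology, so this inclusion is exactly the statement that it is the coarsest one. To establish the inclusion I first invoke \cite[Theorem~4]{Bardyla-2018}: in any semitopological semigroup structure on $G(E)$ every non-zero element is isolated. Consequently every subset of $G(E)\setminus\{0\}$ is $\tau$-open, and the only $\tau_{\mathcal{F}_{cf}}$-open sets whose $\tau$-openness is not immediate are those containing $0$. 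Each such set contains a basic neighbourhood $U_F(0)$ with $F$ cofinite, so everything reduces to a single assertion: \emph{for every cofinite $F\subseteq\operatorname{Path}(E)$ the set $U_F(0)$ is a $\tau$-neighbourhood of $0$.}

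Next I would simplify this assertion. Writing $S=\operatorname{Path}(E)\setminus F$ (a finite set) and using $U_{F_1}(0)\cap U_{F_2}(0)=U_{F_1\cap F_2}(0)$, one has $U_F(0)=\bigcap_{c\in S}U_{\operatorname{Path}(E)\setminus\{c\}}(0)$, a finite intersection; as finite intersections of $\tau$-neighbourhoods are $\tau$-neighbourhoods, it suffices to treat each single path $c$. Moreover inversion is a $\tau$-homeomorphism with $(U_G(0))^{-1}=U_G(0)$, so if $W$ is a $\tau$-neighbourhood of $0$ disjoint from $\{cb^{-1}:r(b)=r(c)\}$ then $W\cap W^{-1}$ is disjoint from the whole family $\{ab^{-1}:a=c\text{ or }b=c\}$, which is precisely the complement of $U_{\operatorname{Path}(E)\setminus\{c\}}(0)$. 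Thus it is enough to separate $0$ from $\{cb^{-1}:r(b)=r(c)\}$, and I will argue by contradiction: if no such $W$ existed, then $0$ would lie in the $\tau$-closure of this family, yielding a net $x_\gamma=cb_\gamma^{-1}\to 0$ in $\tau$ (note that every member of this family already has left coordinate $c$, so no subnet selection is needed).

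The heart of the argument is the identity that forces all these elements to share one idempotent. For each $\gamma$,
\[
x_\gamma x_\gamma^{-1}=cb_\gamma^{-1}\cdot b_\gamma c^{-1}=c\,r(b_\gamma)\,c^{-1}=cc^{-1},
\]
using $b_\gamma^{-1}b_\gamma=r(b_\gamma)=r(c)$ and $c\,r(c)=c$; this value is \emph{independent} of $\gamma$. Continuity of inversion gives $x_\gamma^{-1}\to 0$, and joint continuity of the multiplication then gives $x_\gamma x_\gamma^{-1}\to 0\cdot 0=0$. Hence the constant net with value $cc^{-1}$ $\tau$-converges to $0$, so $cc^{-1}=0$ by Hausdorffness of $\tau$ — contradicting that $cc^{-1}$ is a non-zero idempotent of $G(E)$ (indeed $cc^{-1}c=c\neq 0$). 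This contradiction shows $U_{\operatorname{Path}(E)\setminus\{c\}}(0)$, and therefore every $U_F(0)$, is a $\tau$-neighbourhood of $0$, completing the proof that $\tau_{\mathcal{F}_{cf}}\subseteq\tau$.

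I expect the only genuinely clever point to be spotting the collapsing identity $x_\gamma x_\gamma^{-1}=cc^{-1}$: once the left coordinate of the bad elements is pinned to the fixed path $c$, the algebra of the inverse semigroup makes the idempotent $x_\gamma x_\gamma^{-1}$ constant, and a constant net can converge only to its value. Everything surrounding this — the reduction to a single $c$, the inversion symmetry, and the appeal to isolation of non-zero points — is routine, so I do not anticipate serious obstacles there.
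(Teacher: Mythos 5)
Your proof is correct and rests on the same key fact as the paper's: $(ab^{-1})(ab^{-1})^{-1}=aa^{-1}$ and $(ab^{-1})^{-1}(ab^{-1})=bb^{-1}$, combined with the continuity of $x\mapsto xx^{-1}$ and $x\mapsto x^{-1}x$. The paper packages this more directly --- it observes that $U_F(0)=\varphi^{-1}(H)\cap\psi^{-1}(H)$ where $\varphi(x)=xx^{-1}$, $\psi(x)=x^{-1}x$ and $H=\{uu^{-1}:u\in F\}\cup\{0\}$ is open in $E(G(E))$ --- so your per-path reduction and the net/contradiction detour are avoidable, but the mathematical content is identical.
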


\begin{proof}
Suppose that $(G(E),\tau)$ is a topological inverse GIS over a graph $E$. By $E(G(E))$ we denote the subset of idempotents of the semigroup $G(E)$.
Observe that for any topological inverse semigroup $S$ the maps $\varphi\colon S\to E(S)$, $\varphi(x)=xx^{-1}$, and $\psi\colon S\to E(S)$, $\psi(x)=x^{-1}x$, are continuous. Recall that each idempotent of the semigroup $G(E)$ is of the form $uu^{-1}$ for some path $u\in \operatorname{Path}(E)$. Fix an arbitrary cofinite subset $F$ of $\op{Path}(E)$. Put $H=\{uu^{-1}\in E(G(E))\mid u\in F\}\cup\{0\}$. Since $H$ is a cofinite subset of $E(G(E))$ it is open in $E(G(E))$. Then the set
$$U_F=\{ab^{-1}\in G(E): a,b\in F\}\cup\{0\}=\varphi^{-1}(H)\cap\psi^{-1}(H)$$ is an open neighbourhood of $0$ in the topological space $(G(E),\tau)$ which yields the inclusion $\tau_{cf}\subset\tau$.
\end{proof}

\begin{proposition}\label{main1}
For an arbitrary graph $E$ the topological inverse semigroup $(G(E),\tau_{\mathcal{F}_{\omega}})$ embeds into the topological inverse polycyclic monoid $(\mathcal{P}_{|G(E)|},\tau_{\mathcal{F}_{\omega}})$.
\end{proposition}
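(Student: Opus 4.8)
The plan is to show that the embedding $F$ of Theorem~\ref{theorem1} becomes a topological embedding once both semigroups carry $\tau_{\mathcal F_\omega}$. First I would note that the construction of $F$ only needs injections $g\colon E^0\to G^+$ and $h\colon E^1\to G^+$ into the positive generators $G^+$ of the target polycyclic monoid; since $\max\{|E^0|,|E^1|\}\le|G(E)|$, such injections exist with $|G^+|=|G(E)|$, so the argument of Theorem~\ref{theorem1} produces verbatim an algebraic embedding $F\colon G(E)\to\mathcal P_{|G(E)|}$. (Alternatively, whenever $\tau_{\mathcal F_\omega}$ is non-degenerate the graph has paths of unbounded length, so $G(E)$ is infinite and $\lambda_E=|G(E)|$, whence the original map already lands in $\mathcal P_{|G(E)|}$.)

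Next I would reduce the problem to the behaviour of $F$ at the zero element. In both $(G(E),\tau_{\mathcal F_\omega})$ and $(\mathcal P_{|G(E)|},\tau_{\mathcal F_\omega})$ every non-zero element is isolated, and $F$ is an injective homomorphism with $F(0)=0$. Hence continuity and openness of $F$ at any non-zero point are automatic: such a point is isolated in the domain, and its image is isolated in $\mathcal P_{|G(E)|}$, and therefore also in the subspace $F(G(E))$. Thus it suffices to check that $F$ is continuous at $0$ and that $F$ carries basic neighbourhoods of $0$ to relatively open sets.

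The decisive ingredient is the length bookkeeping read off from the reduced form obtained in Theorem~\ref{theorem1}: for a non-zero $uv^{-1}=e_1\ldots e_n(f_1\ldots f_m)^{-1}$ one has
\[
F(uv^{-1})=PQ^{-1},\qquad P=g(s(e_1))h(e_1)\cdots h(e_n),\quad Q=g(s(f_1))h(f_1)\cdots h(f_m),
\]
so that $|P|=|u|+1$ and $|Q|=|v|+1$ in $\mathcal P_{|G(E)|}$ (this stays correct when $u$ or $v$ is a vertex, i.e.\ has length $0$). I would verify this identity directly from the defining formula for $F$; this is the only step that needs genuine care. Writing $W_n=\{ab^{-1}:|a|>n,\,|b|>n\}\cup\{0\}$ for the basic neighbourhoods of $0$ in $G(E)$ and $V_n=\{PQ^{-1}:|P|>n,\,|Q|>n\}\cup\{0\}$ for those in $\mathcal P_{|G(E)|}$, the relation $F(uv^{-1})\in V_k\Leftrightarrow |u|,|v|>k-1$ gives immediately $F(W_n)\subseteq V_n$ (continuity at $0$) and $V_{n+1}\cap F(G(E))\subseteq F(W_n)$ (openness at $0$).

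Combining these observations, $F$ is a continuous, open, injective homomorphism, hence a topological embedding of $(G(E),\tau_{\mathcal F_\omega})$ into $(\mathcal P_{|G(E)|},\tau_{\mathcal F_\omega})$. The only real obstacle is establishing the length identity $|P|=|u|+1$, $|Q|=|v|+1$; once it is in hand, the isolation of the non-zero points forces everything else, and the verifications at $0$ are purely formal.
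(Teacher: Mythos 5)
Your proposal is correct and follows essentially the same route as the paper: both rest on the observation that the reduced form of $F(uv^{-1})$ computed in Theorem~\ref{theorem1} has lengths $|u|+1$ and $|v|+1$, which gives $F(U_n(0))=F(G(E))\cap V_{n+1}(0)$ and hence a topological embedding. Your extra remarks (that non-zero points are isolated on both sides, and that $\lambda_E=|G(E)|$ in the non-degenerate case so $F$ indeed lands in $\mathcal{P}_{|G(E)|}$) only make explicit details the paper leaves implicit.
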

\begin{proof}
Observe that for a map $F$ defined in Theorem~\ref{theorem1} the following equation holds: $F(U_n(0))=\mathcal{P}_{|G(E)|}\cap V_{n+1}(0)$ where $U_{n}(0)=\{uv^{-1}\in G(E)\mid \min\{|u|,|v|\}>n\}$ is an open neighbourhood of $0$ in $(G(E),\tau_{\mathcal{F}_{\omega}})$ and $V_{n+1}=\{uv^{-1}\in \mathcal{P}_{|G(E)|}\mid \min\{|u|,|v|\}>n+1\}$ is an open neighborhood of $0$ in $(\mathcal{P}_{|G(E)|},\tau_{\mathcal{F}_{\omega}})$. Hence $F$ is a topological embedding.
\end{proof}

\begin{remark}
Despite the fact that monoid $\mathcal{P}_{\omega}$ embeds into the monoid $\mathcal{P}_{2}$, the topological inverse monoid $(\mathcal{P}_{\omega},\tau_{\mathcal{F}_{\omega}})$ does not embed into $(\mathcal{P}_{2},\tau_{\mathcal{F}_{\omega}})$. Observe that the semilattice $E(\mathcal{P}_{2})$ is a compact subspace of  $(\mathcal{P}_{2},\tau_{\mathcal{F}_{\omega}})$. Recall that all non-zero elements in $(\mathcal{P}_{2},\tau_{\mathcal{F}_{\omega}})$ are isolated. Then for each  homomorphism $f:(\mathcal{P}_{\omega},\tau_{\mathcal{F}_{\omega}})\rightarrow (\mathcal{P}_{2},\tau_{\mathcal{F}_{\omega}})$ the image $f(E(\mathcal{P}_{\omega}))$ is closed and hence compact subset of $E(\mathcal{P}_{2})$. But $E(\mathcal{P}_{\omega})$ is not compact in $(\mathcal{P}_{\omega},\tau_{\mathcal{F}_{\omega}})$.
\end{remark}

However, the situation is different in the case of the topology $\tau_{\mathcal{F}_{cf}}$.

\begin{theorem}\label{main2}
For an arbitrary graph $E$ each injective homomorphism $f:(G(E),\tau_{\mathcal{F}_{cf}})\rightarrow (\mathcal{P}_{\lambda},\tau_{\mathcal{F}_{cf}})$ is a topological embedding.
\end{theorem}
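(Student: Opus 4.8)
The plan is to prove that $f$ is both continuous and open onto its image. First I would dispose of the trivial case: if $G(E)$ is finite then $\operatorname{Path}(E)$ is finite, $\tau_{\mathcal{F}_{cf}}$ is discrete, and there is nothing to prove; so assume $\operatorname{Path}(E)$ (hence $E(G(E))$) is infinite. Since $f$ is an injective homomorphism of inverse semigroups it preserves inversion and sends idempotents to idempotents, and since every non-zero element is isolated in both semigroups the only delicate point is the behaviour at $0$. I would begin by showing $f(0)=0$: from $0\cdot uu^{-1}=0$ one gets $f(0)f(uu^{-1})=f(0)$, i.e. $f(0)\le f(uu^{-1})$ for every idempotent $uu^{-1}$; but a non-zero idempotent of $\mathcal{P}_\lambda$ has only finitely many idempotents above it (those indexed by the prefixes of a fixed path), while injectivity makes $\{f(uu^{-1})\}$ infinite — forcing $f(0)=0$.

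For openness onto the image I would appeal to Theorem~\ref{min}. Pulling back along $f$ gives the initial topology $\tau'=\{f^{-1}(V):V\in\tau_{\mathcal{F}_{cf}}\}$ on $G(E)$; as $f$ is an injective, inversion-preserving homomorphism into the topological inverse semigroup $(\mathcal{P}_\lambda,\tau_{\mathcal{F}_{cf}})$, the space $(G(E),\tau')$ is (isomorphic and homeomorphic to) the inverse subsemigroup $f(G(E))$ with the subspace topology, hence is itself a topological inverse semigroup. Theorem~\ref{min} then gives $\tau_{\mathcal{F}_{cf}}\subseteq\tau'$, which is exactly the assertion that $f$ carries $\tau_{\mathcal{F}_{cf}}$-open sets to open subsets of $f(G(E))$. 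Thus openness comes for free, and the whole statement reduces to the reverse inclusion $\tau'\subseteq\tau_{\mathcal{F}_{cf}}$, i.e. to the continuity of $f$.

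Continuity is the hard part, and it is continuity at $0$ that matters. The engine is the pair of identities $f(ab^{-1})f(ab^{-1})^{-1}=f(aa^{-1})$ and $f(ab^{-1})^{-1}f(ab^{-1})=f(bb^{-1})$. Writing a basic neighbourhood of $0$ in the target as $V_{F'}(0)=\{cd^{-1}:c,d\in F'\}\cup\{0\}$ (with $F'$ cofinite in $\operatorname{Path}(E_\lambda)$, where $\mathcal{P}_\lambda=G(E_\lambda)$) and the corresponding idempotent neighbourhood as $V'_{F'}=\{ww^{-1}:w\in F'\}\cup\{0\}$, the fact that $cd^{-1}\in V_{F'}(0)$ precisely when $cc^{-1},dd^{-1}\in V'_{F'}$ turns the identities into the equivalence
$$f(ab^{-1})\in V_{F'}(0)\iff f(aa^{-1})\in V'_{F'}\ \text{and}\ f(bb^{-1})\in V'_{F'}.$$
So everything reduces to continuity of $f|_{E(G(E))}$ at $0$, namely to showing that $\{u:f(uu^{-1})\notin V'_{F'}\}$ is finite. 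This is where injectivity is used once more: $f(uu^{-1})\notin V'_{F'}$ places $f(uu^{-1})$ in the finite set $\{ww^{-1}:w\notin F'\}$, and the map $u\mapsto f(uu^{-1})$ is injective, so only finitely many $u$ can occur. Setting $F=\operatorname{Path}(E)\setminus\{u:f(uu^{-1})\notin V'_{F'}\}$, which is cofinite, the equivalence yields $U_F(0)\subseteq f^{-1}(V_{F'}(0))$, establishing continuity at $0$.

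The step I expect to be the main obstacle is this continuity argument, and inside it the reduction to idempotents via the two identities; once the displayed equivalence is in hand, the finiteness of the exceptional set is a one-line consequence of injectivity. Combining the free openness from Theorem~\ref{min} with continuity then shows that $f$ is a homeomorphism onto its image, i.e. a topological embedding.
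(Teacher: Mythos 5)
Your proposal is correct. The parts that carry the real content coincide with the paper's argument: you establish $f(0)=0$ exactly as the paper does (the idempotents above a non-zero $vv^{-1}$ in $\mathcal{P}_{\lambda}$ are the finitely many $ww^{-1}$ with $w$ a prefix of $v$, contradicting injectivity on the infinite set $E(G(E))$), and your continuity argument is the paper's, resting on the identities $f(ab^{-1})f(ab^{-1})^{-1}=f(aa^{-1})$ and $f(ab^{-1})^{-1}f(ab^{-1})=f(bb^{-1})$ plus injectivity of $u\mapsto f(uu^{-1})$ to make the exceptional set of paths finite. Where you genuinely diverge is openness onto the image. The paper proves it by direct computation: for a cofinite $H\subseteq\operatorname{Path}(E)$ with complement $\{a_1,\ldots,a_n\}$ and $f(a_i)=u_iv_i^{-1}$, it takes $G=\operatorname{Path}(\hat{E})\setminus(\{u_i\}_{i\leq n}\cup\{v_i\}_{i\leq n})$ and verifies $U_G(0)\cap f(G(E))\subseteq f(U_H(0))$. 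You instead transport the target topology back along the injection to an initial topology $\tau'$ on $G(E)$, observe that $(G(E),\tau')$ is again a Hausdorff topological inverse semigroup (being a copy of the inverse subsemigroup $f(G(E))$ with the subspace topology), and invoke Theorem~\ref{min} to get $\tau_{\mathcal{F}_{cf}}\subseteq\tau'$, which is precisely openness of $f$ onto its image. Your route is shorter and makes transparent that openness is a formal consequence of the minimality of $\tau_{\mathcal{F}_{cf}}$ among inverse semigroup topologies; the paper's route is self-contained and exhibits the witnessing cofinite set explicitly. Both are valid.
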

\begin{proof}
Let $E$ be an arbitrary graph and $\lambda$ be any cardinal such that there exists an injective homomorphism $f:G(E)\rightarrow \mathcal{P}_{\lambda}$ (by Theorem~\ref{theorem1}, such cardinal $\lambda$ exists). By $\hat{E}$ we denote the graph which consists of one vertex and $\lambda$ distinct loops.  Observe that semigroups $G(\hat{E})$ and $\mathcal{P}_{\lambda}$ are isomorphic. If graph inverse semigroup $G(E)$ is finite, then the proof is straightforward. Suppose that $G(E)$ is infinite.

We claim that $f(0)=0$. Assuming the contrary, let $f(0)=uu^{-1}\in E(\mathcal{P}_{\lambda})\setminus\{0\}$. Then for each idempotent $e\in E(G(E))$, $f(e)\in\{vv^{-1}\mid u\leq v\}$. The injectivity of the map $f$  implies that the set $E(G(E))$ is finite. Since each GIS is combinatorial (each $\mathcal{H}$-class is singleton) we obtain that semigroup $G(E)$ is finite (the injection $h:G(E)\rightarrow E(G(E))\times E(G(E))$ can be defined as follows: $h(uv^{-1})=(uu^{-1},vv^{-1})$ and $h(0)=(0,0)$) which contradicts our assumption. Hence $f(0)=0$.

Fix an arbitrary open neighbourhood $U_F(0)$ of $0$ in $(\mathcal{P}_{\lambda},\tau_{\mathcal{F}_{cf}})$. Put $A=\{uu^{-1}\mid u\in F\}.$  Denote $H=\{a\in \op{Path}(E)\mid aa^{-1}\in f^{-1}(A)\}$. Since the map $f$ is injective $H$ is a cofinite subset of $\op{Path}(E)$. We claim that $f(U_H(0))\subseteq f(G(E))\cap U_F(0)$. Indeed, fix an arbitrary element $ab^{-1}\in U_H(0)$ and put $f(ab^{-1})=cd^{-1}$. Observe that $$cc^{-1}=cd^{-1}dc^{-1}=f(ab^{-1})\cdot f(ba^{-1})=f(ab^{-1}\cdot ba^{-1})=f(aa^{-1}).$$
The definition of the set $H$ implies that $c\in F$. Analogously, $$dd^{-1}=dc^{-1}\cdot cd^{-1}=f(ba^{-1})\cdot f(ab^{-1})=f(bb^{-1})$$ which implies that $d\in F$. Then $cd^{-1}=f(ab^{-1})\in U_{F}(0)$. Hence $f(U_{H}(0))\subseteq f(G(E))\cap U_{F}(0)$ which provides a continuity of the map $f$.

Fix an arbitrary cofinite subset $H$ of $\op{Path}(E)$. Put $\op{Path}(E)\setminus H=\{a_1,\ldots,a_n\}$. For each $i\leq n$ denote $f(a_i)=u_iv_i^{-1}$. Put $G=\op{Path}(\hat{E})\setminus(\{u_i\}_{i\leq n}\cup\{v_i\}_{i\leq n})$.
Obviously, $G$ is a cofinite subset of $\op{Path}(\hat{E})$. Fix an arbitrary element $ab^{-1}\notin U_{H}(0)$. Then $a\notin H$ or $b\notin H$. Suppose that $a\notin H$. Since $r(a)=r(b)$ we obtain that $a^{-1}a=b^{-1}b$. By our assumption, there exists $i\leq n$ such that $f(a)=u_iv_i^{-1}$. Then $f(b)=uv_i^{-1}$, because $f(a^{-1}a)=v_iv_i^{-1}=f(b^{-1}b)$. Hence $f(ab^{-1})=f(a)\cdot f(b)^{-1}=u_iu^{-1}\notin U_{G}(0)$. Similar arguments work when $b\notin H$ which provides that $f(ab^{-1})\notin U_G(0)$. Hence $U_{G}(0)\cap f(G(E))\subseteq f(U_H(0))$ which implies that the homomorphism $f$ is open on the image $f(G(E))$.
Hence the map $f$ is a topological embedding.
\end{proof}

However, in general case we have a very intriguing question:
\begin{question}
Is it true that each topological inverse graph inverse semigroup $G(E)$ is a subsemigroup of a topological inverse polycyclic monoid $\mathcal{P}_{|G(E)|}$?
\end{question}


\begin{thebibliography}{00}

\bibitem{Abrams-2005}
G.~Abrams, G. Aranda Pino,
{\em The Leavitt path algebra of a graph},
J. Algebra \textbf{293} (2005), 319--334.

\bibitem{Amal-2016}
Amal AlAli, N.D. Gilbert,
{\em Closed inverse subsemigroups of graph inverse semigroups},
arXiv:1608.04538.



\bibitem{Ara-2007}
P.~Ara, M.~A.~Moreno, E.~Pardo,
{\em Non-stable K-theory for graph algebras},
Algebr. Represent. Theory \textbf{10} (2007), 157--178.









\bibitem{Bardyla-2018(1)}
S.~Bardyla,
\emph{An alternative look at the structure of graph inverse semigroups},
preprint, arXiv:1806.09671.


\bibitem{Bardyla-2018}
S.~Bardyla,
\emph{On locally compact semitopological graph inverse semigroups},
Matematychni Studii. \textbf{49} (1), (2018), 19--28.
	

\bibitem{Bardyla-2017}
S.~Bardyla,
\emph{On locally compact topological graph inverse semigroups},
preprint, (2017), arXiv:1706.08594.



\bibitem{Bardyla-2016(1)}
S.~Bardyla,
\emph{Classifying locally compact semitopological polycyclic monoids},
Math. Bulletin of the Shevchenko Scientific Society, {\bf 13}, (2016), 21--28.



\bibitem{BardGut-2016(1)}
S.~Bardyla, O.~Gutik,
\emph{On a semitopological polycyclic monoid},
Algebra Discr. Math. \textbf{21} (2016), no. 2, 163--183.

\bibitem{BardGut-2016(2)}
S.~Bardyla, O.~Gutik,
\emph{On a complete topological inverse polycyclic monoid},
Carpathian Math. Publ. \textbf{8} (2), (2016), 183--194.







\bibitem{Clifford-Preston-1961-1967}
A.~Clifford, G.~Preston,
\emph{The Algebraic Theory of Semigroups}, Vols. I and II,
Amer. Math. Soc. Surveys {\bf 7}, Providence, R.I.,  1961 and  1967.

\bibitem{Cuntz-1980}
J.~Cuntz, W.~Krieger,
{\em A class of $C^{*}$-algebras and topological Markov chains},
Invent. Math. \textbf{56} (1980), 251--268.






\bibitem{Gutik-2015}
O.~Gutik,
\emph{ On the dichotomy of the locally compact semitopological bicyclic monoid with adjoined zero}.
Visn. L'viv. Univ., Ser. Mekh.-Mat. \textbf{80} (2015), 33--41.














\bibitem{Jones-2011}
D.~Jones,
\emph{ Polycyclic monoids and their generalizations}.
PhD Thesis, Heriot-Watt University, 2011.

\bibitem{Jones-Lawson-2014}
D.~Jones, M.~Lawson,
\emph{Graph inverse semigroups: Their characterization and completion},
J. Algebra \textbf{409} (2014), 444--473.

\bibitem{Kumjian-1998}
A.~Kumjian, D.~Pask, I.~Raeburn,
\emph{Cuntz-Krieger algebras of directed graphs},
Pacific J. Math. \textbf{184} (1998), 161--174.


\bibitem{Lawson-1998}
M.~Lawson,
\emph{Inverse Semigroups. The Theory of Partial Symmetries},
Singapore: World Scientific, 1998.

\bibitem{Lawson-2009}
M.~Lawson,
\emph{Primitive partial permutation representations of the polycyclic monoids and branching function systems},
Period. Math. Hungar. \textbf{58} (2009), 189--207.

\bibitem{Meakin-1993}
J.~Meakin, M.~Sapir,
\emph{Congruences on free monoids and submonoids of polycyclic monoids},
J. Austral. Math. Soc. Ser. A \textbf{54} (2009), 236--253.

\bibitem{Mesyan-2016}
Z.~Mesyan, J.D.~Mitchell,
{\em The structure of a graph inverse semigroup},
Semigroup Forum \textbf{93} (2016), 111-130.

\bibitem{Mesyan-Mitchell-Morayne-Peresse-2013}
Z.~Mesyan, J. D.~Mitchell, M.~Morayne, Y. H. P\'{e}resse,
\emph{Topological graph inverse semigroups},
Topology and its Applications, Volume \textbf{208} (2016), 106–126.


\bibitem{Nivat-Perrot-1970}
M.~Nivat, J.-F.~Perrot,
\emph{Une g\'{e}n\'{e}ralisation du monoide bicyclique},
C. R. Acad. Sci., Paris, S\'{e}r. A \textbf{271} (1970), 824--827.


\bibitem{Paterson-1999}
A.~Paterson,
\emph{Graph inverse semigroups, groupoids and their $C^{*}$-algebras},
Birkh\"auser, 1999.






\end{thebibliography}
\end{document}